\tikzset{>=stealth',
  head/.style = {fill = white, text=black},
  plaque/.style = {draw, rectangle, minimum size = 10mm}, 
  pil/.style={->,thick},
  junct/.style = {draw,circle,inner sep=0.5pt,outer sep=0pt, fill=black}
  }
\newtheorem{theorem}{Theorem}[section]
\newtheorem{lemma}[theorem]{Lemma}
\newtheorem{proposition}[theorem]{Proposition}
\theoremstyle{definition}
\newtheorem{definition}[theorem]{Definition}
\newenvironment{example}
  {\pushQED{\qed}\examplex}
  {\popQED\endexamplex}
\theoremstyle{remark}
\newtheorem{remark}[theorem]{Remark}
\numberwithin{equation}{section}
\newcommand{\SYT}{\ensuremath{\mathrm{SYT}}}
\newcommand{\gen}{\ensuremath{\mathrm{Gen}}}
\newcommand{\qygen}{\ensuremath{\mathrm{QYGen}}}
\newcommand{\incgl}{\ensuremath{\mathrm{Inc_{gl}}}}
\newcommand{\xx}{\ensuremath{\mathbf{x}}}
\newcommand{\wt}{\ensuremath{\mathrm{wt}}}
\newcommand{\des}{\ensuremath{\mathrm{Des}}}
\newcommand{\reg}{\ensuremath{\mathrm{reg}}}
\newcommand{\qsym}{\ensuremath{\mathrm{QSym}}}
\begin{document}


\title{The genomic Schur function is fundamental-positive}  

\author[O. Pechenik]{Oliver Pechenik}
\address[OP]{Department of Mathematics, University of Michigan, Ann Arbor, MI 48109}
\email{pechenik@umich.edu}


\date{\today}


\keywords{}

\begin{abstract}
In work with A.~Yong, the author introduced genomic tableaux to prove the first positive combinatorial rule for the Littlewood-Richardson coefficients in  torus-equivariant $K$-theory of Grassmannians. We then studied the genomic Schur function $U_\lambda$, a generating function for such tableaux, showing that it is non-trivially a symmetric function, although generally not Schur-positive. Here we show that $U_\lambda$ is, however, positive in the basis of fundamental quasisymmetric functions. We give a positive combinatorial formula for this expansion in terms of gapless increasing tableaux; this is, moreover, the first finite expression for $U_\lambda$. Combined with work of A.~Garsia and J.~Remmel, this yields a compact combinatorial (but necessarily non-positive) formula for the Schur expansion of $U_\lambda$. 
\end{abstract}

\maketitle

%
\section{Introduction}
%
\label{sec:introduction}

The {\bf Grassmannian} $X = {\rm Gr}_k(\mathbb{C}^n)$ is the parameter space of $k$-dimensional vector subspaces of $\mathbb{C}^n$. The Grassmannian has the structure of a cell complex, where the cells are naturally indexed by partitions $\lambda$ whose Young diagrams fit inside a $k \times (n-k)$ rectangle. This cell complex structure yields the Schubert basis $\{ \sigma_\lambda \}$ of the integral cohomology ring $H^\star(X; \mathbb{Z})$. With respect to the Schubert basis, the structure constants of this $\mathbb{Z}$-algebra are the famous Littlewood-Richardson coefficients $c_{\lambda, \mu}^\nu \in \mathbb{Z}_{\geq 0}$; that is, 
\[
\sigma_\lambda \cdot \sigma_\mu = \sum_{\nu \subseteq k \times (n-k)} c_{\lambda, \mu}^\nu \sigma_\nu. 
\]
Classically, many positive combinatorial rules for these coefficients are known.

The Grassmannian also carries a natural action of a rank $n$ algebraic torus $T$. Considering the structure sheaves of the cell closures gives rise to an analogous Schubert basis $\{ \mathcal{O}_\lambda \}$ of the torus-equivariant $K$-theory ring $K_T(X)$ of equivariant algebraic vector bundles over $X$. The analogues of Littlewood-Richardson coefficients in this setting are the Laurent polynomials $K_{\lambda, \mu}^\nu  \in K_T({\rm pt})$ given by 
\[
\mathcal{O}_\lambda \cdot \mathcal{O}_\mu = \sum_{\nu \subseteq k \times (n-k)} K_{\lambda, \mu}^\nu \mathcal{O}_\nu. 
\]
Positive combinatorial rules for $K_{\lambda, \mu}^\nu$ were conjectured by A.~Knutson--R.~Vakil \cite{Coskun.Vakil} and by H.~Thomas--A.~Yong \cite{Thomas.Yong:HT}.

Genomic tableaux were introduced in \cite{Pechenik.Yong:KT} as the key object in the first proved positive combinatorial rule for $K_{\lambda, \mu}^\nu$. Genomic tableaux have been further studied in \cite{Monical,Gillespie.Levinson, Pechenik.Yong:KT2, Pechenik.Yong:genomic, Pylyavskyy.Yang}, yielding, in particular, resolutions of the Knutson-Vakil and Thomas-Yong conjectures mentioned above. In \cite{Pechenik.Yong:genomic}, A.~Yong and the author further developed the combinatorial theory of genomic tableaux with relation to the non-equivariant $K$-theory of $X$, introducing the \emph{genomic Schur function} as a natural deformation of the ordinary Schur function and generating function for genomic tableaux. This paper further studies the genomic Schur function, giving the first finite formulas for it, as well as proofs of new positivity properties that it enjoys.

Consider the Young diagram of a partition $\lambda$ (in English orientation, so the longer rows are above). A {\bf semistandard tableau} of shape $\lambda$ is a filling $T$ of the boxes of this Young diagram by positive integers that weakly increase from left to right along rows and strictly increase from top to bottom down columns. 

\begin{definition}[\cite{Pechenik.Yong:KT}]
A {\bf genomic tableau} is a semistandard tableau $T$ together with, for each $i \in \mathbb{Z}_{> 0}$, a decomposition of the boxes labeled $i$ into blocks, called {\bf genes}. These decompositions satisfy:
\begin{itemize}
\item if a gene $\mathcal{G}$ contains boxes labeled $i$ in columns $c_1, c_3$ with $c_1 < c_3$, then any $i$ in an intervening column $c_2$ with $c_1 < c_2 < c_3$ is in $\mathcal{G}$;
\item each gene $\mathcal{G}$ contains at most one box in any row.
\end{itemize}
\end{definition}

\begin{example}
The semistandard tableau 
\[
T = \ytableaushort{{*(Lavender) 1} {*(SkyBlue) 2} {*(Dandelion) 2},{*(SkyBlue)2}{*(SpringGreen)3}{*(Mulberry)4},{*(Mulberry)4}}
\]
is a genomic tableau with respect to the gene decomposition indicated by the coloring of the boxes. However, the decompositions
\[
\ytableaushort{{*(Lavender) 1} {*(SkyBlue) 2} {*(Dandelion) 2},{*(Dandelion)2}{*(SpringGreen)3}{*(Mulberry)4},{*(Mulberry)4}} \; \text{ and } \; \ytableaushort{{*(Lavender) 1} {*(SkyBlue) 2} {*(SkyBlue) 2},{*(Dandelion)2}{*(SpringGreen)3}{*(Mulberry)4},{*(Mulberry)4}}
\]
are not valid genomic tableaux; the first decomposition violates the first condition, while the second violates the second.
\end{example}

We write $\gen(\lambda)$ to denote the set of all genomic tableaux of shape $\lambda$. For $n >0$, $\gen^n(\lambda) \subset \gen(\lambda)$ denotes the subset of tableaux with all entries at most $n$.

If a gene $\mathcal{G}$ consists of boxes labeled $i$, we call $\mathcal{G}$ an {\bf $i$-gene}. A {\bf (strong) composition} is a finite sequence of positive integers, while a {\bf weak composition} is a finite sequence of nonnegative integers.  The {\bf weight} of a genomic tableau $T$ is the weak composition $\wt(T)$ whose $i$th component records the number of $i$-genes in $T$.
Just as Schur functions are generating functions for semistandard tableaux, genomic Schur functions are analogous generating functions for genomic tableaux. For a weak composition $a$, we use the shorthand $\xx^a \coloneqq \prod_{i \in \mathbb{Z}_{>0}} x_i^{a_i}$.

\begin{definition}[\cite{Pechenik.Yong:genomic}]\label{def:genomicSchur}
For a partition $\lambda$, the {\bf genomic Schur function} $U_\lambda$ is given by
\begin{equation}
U_\lambda \coloneqq \sum_{T \in \gen(\lambda)} \xx^{\wt(T)}.\label{eq:genomic_schur}
\end{equation}
The {\bf genomic Schur polynomial} $U_\lambda(x_1, \dots, x_n)$ in $n$ variables is given by specializing $U_\lambda$ by $x_m \to 1$ for $m > n$, or equivalently by restricting the sum in Equation~\eqref{eq:genomic_schur} to $\gen^n(\lambda)$.
\end{definition}

The power series $U_\lambda$ was shown in \cite[Theorem~6.6]{Pechenik.Yong:genomic} to be a symmetric function by a Bender-Knuth-type argument. Schur functions form a basis of the algebra of symmetric functions, so every symmetric function may be written uniquely as a linear combination of Schur functions. For many important symmetric functions appearing in algebraic combinatorics, these linear combinations have (or are conjectured to have) only \emph{positive} coefficients. Under standard interpretations of Schur functions, this yields immediate interpretations of such Schur-positive symmetric functions as representing Grassmannian cohomology classes or general linear group representations. Although $U_\lambda$ is Schur-positive for very small $\lambda$, it is not Schur-positive in general.

\begin{example}[{cf.~\cite[Example 6.7]{Pechenik.Yong:genomic}}]\label{ex:nonpositive}
Let $\lambda = (3,3,3)$. Then,
\begin{align*}
U_\lambda &= s_{333} + s_{22211} - s_{2222} + s_{3221} + s_{1111111} + 2s_{211111} + s_{22111} + s_{2221} + s_{31111} \\
&+ 4s_{111111} + 2s_{21111} + s_{11111}.
\end{align*}
We will explain this example further in Section~\ref{sec:Schur}.
\end{example}

Although genomic Schur functions are not Schur-positive, we will prove a weaker positivity property that they enjoy. 
A multivariate power series $f$ is {\bf quasisymmetric} if, for any two increasing sequences $i_1 < \dots < i_n$ and $j_1 < \dots < j_n$ and any sequence $e_1, \dots, e_n$ of nonnegative integers, the coefficients of the monomials $\prod_{k = 1}^n x_{i_k}^{e_k}$ and $\prod_{k = 1}^n x_{j_k}^{e_k}$ in $f$ are equal. Clearly, all symmetric functions (in particular, genomic Schur functions) are quasisymmetric. The collection of all quasisymmetric functions forms a $\mathbb{Z}$-algebra $\qsym$ with symmetric functions as a subalgebra.

The {\bf positive part} of a weak composition $a$ is the composition $a^+$ given by removing all the $0$'s in $a$. 
For two strong compositions $\alpha$ and $\beta$, we say $\beta$ {\bf refines} $\alpha$  (in symbols, $\beta \vDash \alpha$) if $\alpha$ can be obtained by summing consecutive components of $\beta$. For example, \[(1,2,1,1) \vDash (3,2),\] while \[(1,2,1,1) \nvDash (2,3).\]
For each strong composition $\alpha$, the {\bf fundamental quasisymmetric function} $F_\alpha$ is given by 
\[
F_\alpha = \sum_{a^+ \vDash \alpha} \xx^a,
\]
where the sum is over weak compositions whose  positive part refines $\alpha$. These quasisymmetric functions were introduced by I.~Gessel \cite{Gessel} with application to $P$-partitions; they have since become ubiquitous in algebraic combinatorics. Importantly, the fundamental quasisymmetric functions form an additive basis of the $\mathbb{Z}$-algebra $\qsym$. In particular, every \emph{symmetric} function can be written uniquely as a linear combination of fundamental quasisymmetrics. I.~Gessel \cite{Gessel} showed that Schur functions are \emph{positive} combinations of fundamental quasisymmetrics; hence, a Schur-positive symmetric function is necessarily also fundamental-positive. Our main result is that, while genomic Schur functions are not generally Schur-positive, they are nonetheless always fundamental-positive.

\begin{theorem}\label{thm:main}
For any partition $\lambda$, the genomic Schur function $U_\lambda$ is a finite positive combination of fundamental quasisymmetric functions.
\end{theorem}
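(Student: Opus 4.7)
The plan is to establish fundamental-positivity by explicitly constructing a finite positive expansion $U_\lambda = \sum_S F_{\alpha(S)}$, where $S$ ranges over a finite set of \emph{gapless increasing tableaux} (GITs) of shape $\lambda$ and $\alpha(S)$ is a composition encoding the forced descents of $S$. This follows the standard descent-compatible standardization template for proving fundamental-positivity of a symmetric function.

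First, I would define a standardization map $\mathrm{std}\colon \gen(\lambda) \to \mathrm{GIT}(\lambda)$. Given $T \in \gen(\lambda)$ with $\wt(T) = a$ and $N = \sum_i a_i$ total genes, relabel each gene by a distinct integer in $\{1, 2, \ldots, N\}$ so that the $a_i$ many $i$-genes receive labels in the consecutive block $\bigl(\sum_{j<i} a_j,\,\sum_{j \leq i} a_j\bigr]$, ordered among themselves by a canonical rule (e.g.\ topmost row first, with ties broken by leftmost column in that row). The result $\mathrm{std}(T) = S$ is a filling with strictly increasing rows and columns in which each label's boxes inherit a gene structure from $T$; call such an $S$ a gapless increasing tableau. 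Next, for each GIT $S$ with $N$ labels, attach a descent composition $\alpha(S)$ based on which block-merges are forbidden: one natural choice places a descent at position $i$ whenever merging labels $i$ and $i+1$ into a single label would force a violation of either column strictness or the column-convexity gene condition.

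The crux is then to prove: for each GIT $S$, the genomic tableaux $T$ with $\mathrm{std}(T) = S$ are in bijection (via $T \mapsto \wt(T)$) with the weak compositions $a$ of $N$ whose positive part refines $\alpha(S)$. Granted this, summing over all GITs $S$ and invoking the definition $F_\alpha = \sum_{a^+ \vDash \alpha} \xx^a$ immediately yields $U_\lambda = \sum_S F_{\alpha(S)}$, a finite positive combination of fundamental quasisymmetrics. The inverse (destandardization) takes a pair $(S, a)$ satisfying the refinement condition and recovers $T$ by collapsing the labels of $S$ into blocks of sizes determined by $a$.

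The main obstacle will be verifying the refinement characterization of valid weights. Gene condition~1 can impose non-local constraints: two labels $i < j$ with $j - i > 1$ may be forced into distinct blocks because merging them would produce interleaving column ranges violating column convexity, even when no single adjacent pair $(k, k+1)$ in $[i, j)$ is individually forbidden from merging. Overcoming this difficulty may require either a more subtle definition of $\des(S)$ (perhaps reading off separations at which specific gene column-structures force descents) or a proof that all non-local constraints are implied by the locally-defined descents together with the monotonicity already built into the block structure. I anticipate completing the proof via a detailed case analysis of how two genes can consistently share a common label in a destandardization, together with an induction on the complexity of the gene decomposition that reduces to singleton-gene cases.
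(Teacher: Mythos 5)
Your overall architecture --- standardize each genomic tableau to a gapless increasing tableau, attach a descent composition $\alpha(S)$, and show the fibers are exactly the weak compositions whose positive part refines $\alpha(S)$ --- is the strategy behind the paper's Theorem~\ref{thm:inc}, so the plan is sound in outline. But two things are wrong or missing. First, the canonical gene ordering you propose (``topmost row first, ties broken by leftmost column'') does not yield an increasing tableau. The boxes of value $i$ form a horizontal strip, so a gene to the right can reach a strictly higher row than a gene to its left while the two still share a row; your rule then gives the right-hand gene the smaller label and breaks row-increasingness. Concretely, take $i$-genes $A=\{(3,1),(2,2)\}$ and $B=\{(2,3),(1,4)\}$ (realizable, e.g., as the $3$'s in the tableau with rows $1123$, $233$, $3$): they share row $2$, but your rule labels $B$ before $A$. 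The correct ordering, used in the paper's $K$-standardization $\Phi$, is left to right by column.

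Second, and more seriously, the step you yourself flag as the crux --- that the preimages of $S$ are exactly parametrized by weak compositions refining $\alpha(S)$, i.e.\ that the apparently non-local gene constraints are implied by adjacent-pair descents --- is the entire mathematical content of the theorem, and ``a detailed case analysis together with an induction'' is not a proof of it. The statement is true and provable (if no $k\in\{i,\dots,j-1\}$ is a descent, an induction shows every box labelled $k$ lies weakly below every box labelled $k+1$, so the boxes labelled $i,\dots,j$ form a horizontal strip with weakly increasing labels and can be merged into genes of a single new value), but you must supply that argument, and you should use the concrete descent definition ($i$ is a descent of $S$ iff some $i$ lies in a strictly higher row than some $i+1$) rather than your circular ``merging is forbidden'' formulation. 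The paper avoids the difficulty entirely by factoring your map through quasiYamanouchi genomic tableaux: a regularization map $\reg:\gen(\lambda)\to\qygen(\lambda)$ that only decrements values while keeping the gene decomposition fixed, whose fiber over $U$ is transparently the set of re-cuttings of the linearly ordered gene list, hence the refinements of $\wt(U)$ (giving Theorem~\ref{thm:qy}, from which Theorem~\ref{thm:main} is immediate); the passage to increasing tableaux is then a separate weight-preserving bijection $\qygen(\lambda)\cong\incgl(\lambda)$, justified by citing the Pieri-strip theorem of the earlier Pechenik--Yong paper. Either adopt that factorization or prove the locality lemma directly; as written, the proposal has a gap exactly where the work is.
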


Classically, Schur-positive symmetric functions correspond, under the \emph{Frobenius character} map, to representations of symmetric groups. Under this map, the irreducible representations of symmetric groups are taken to the corresponding Schur functions. Analogously, one has a \emph{quasisymmetric Frobenius character} map, taking representations of the (type A) $0$-Hecke algebras $\mathcal{H}_i(0)$ to quasisymmetric functions \cite{Duchamp.Krob.Leclerc.Thibon,Krob.Thibon}. The irreducible $0$-Hecke modules $\mathcal{C}_\alpha$ are all $1$-dimensional \cite{Norton} and correspond, under this map, to the fundamental quasisymmetric functions $F_\alpha$.  
Therefore, every homogeneous fundamental-positive quasisymmetric function $f = \sum_\alpha c_\alpha F_\alpha$ is the quasisymmetric Frobenius character of a $\mathcal{H}_i(0)$-module $M$; in particular, one may construct $M$ as the  direct sum of these $1$-dimensional irreducibles, one for each $F_\alpha$ in the expansion of $f$, i.e.\ $M = \bigoplus_\alpha \mathcal{C}_\alpha$. 

In this way, Theorem~\ref{thm:main} may be interpreted in terms of the representation theory of $0$-Hecke algebras. It would, however, be significantly more interesting to realize (the $k$th homogeneous piece of) $U_\lambda$ as the quasisymmetric Frobenius character of an \emph{indecomposable} $0$-Hecke module. Analogous such constructions have been obtained recently for the the \emph{dual immaculate quasisymmetric functions} \cite{Berg.Bergeron.Saliola.Serrano.Zabrocki} and for certain \emph{quasisymmetric Schur functions} \cite{Tewari.vanWilligenburg}. Unfortunately, we were not able to build indecomposable $0$-Hecke modules corresponding to the homogeneous pieces of $U_\lambda$. We further consider this perspective in Remark~\ref{rem:projective}.

We prove Theorem~\ref{thm:main} by establishing two explicit positive combinatorial formulas for the fundamental quasisymmetric expansion of $U_\lambda$. Both formulas are analogues of known formulas for the fundamental quasisymmetric expansion of a Schur function $s_\lambda$. In Section~\ref{sec:qy}, we prove a genomic analogue (Theorem~\ref{thm:qy}) of the formula of S.~Assaf--D.~Searles \cite{Assaf.Searles} in terms of quasiYamanouchi semistandard tableaux. In Section~\ref{sec:inc}, we prove an analogue (Theorem~\ref{thm:inc}) of the formula of I.~Gessel \cite{Gessel} in terms of standard Young tableaux. An advantage of the former formula is that it restricts better to an efficient formula for the finite-variable truncation $U_\lambda(x_1, \dots, x_n)$, while an advantage of the latter formula is that it allows an alternate definition of genomic Schur functions without reference to genomic tableaux. Section~\ref{sec:Schur} builds on these results to give a (signed) combinatorial rule (Theorem~\ref{thm:schur}) for the Schur expansion of a genomic Schur function. In particular, Proposition~\ref{prop:2row} proves Schur-positivity of $U_\lambda$ in the special case that the partition $\lambda$ has at most $2$ rows.

\section{QuasiYamanouchi tableaux}\label{sec:qy}

S.~Assaf--D.~Searles introduced the following notion of a quasiYamanouchi tableau.

\begin{definition}[{\cite[Definition~2.4]{Assaf.Searles}}]
A semistandard tableau $T$ is {\bf quasiYamanouchi} if, for each integer $i > 1$ that appears in $T$, there is some instance of $i$ weakly west of some instance of $i-1$.  
\end{definition}

We will say that a genomic tableau $U$ is {\bf quasiYamanouchi} if its underlying semistandard tableau is. Let $\qygen(\lambda)$ denote the set of quasiYamanouchi genomic tableaux of shape $\lambda$. Note that for any fixed $\lambda$, $\qygen(\lambda)$ is necessarily a finite set.

\begin{theorem}\label{thm:qy}
For any partition $\lambda$, we have
\[
U_\lambda = \sum_{T \in \qygen(\lambda)} F_{\wt(T)}.
\]
Similarly, for any $n > 1$, 
\[
U_\lambda(x_1, \dots, x_n) = \sum_{T \in \qygen^n(\lambda)} F_{\wt(T)}.
\]
\end{theorem}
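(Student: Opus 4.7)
The plan is to adapt the proof of the Schur analogue due to Assaf-Searles to the genomic setting. I will construct a canonicalization map $\phi \colon \gen(\lambda) \to \qygen(\lambda)$ and show that the fiber $\phi^{-1}(S)$ over each $S$ is in weight-preserving bijection with the set of weak compositions $a$ satisfying $a^+ \vDash \wt(S)$. Granting this, the theorem is immediate by regrouping the defining sum:
\[
U_\lambda = \sum_{T \in \gen(\lambda)} \xx^{\wt(T)} = \sum_{S \in \qygen(\lambda)} \sum_{a \, : \, a^+ \vDash \wt(S)} \xx^a = \sum_{S \in \qygen(\lambda)} F_{\wt(S)}.
\]

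The map $\phi$ is built from an iterated \emph{shift} operation. Call $i > 1$ a \emph{violating index} for $T$ if $i$ appears in $T$ but no instance of $i$ lies weakly west of any instance of $i - 1$ (this condition holds vacuously when $i-1$ does not appear). The shift at a violating index $i$ decrements every entry $\geq i$ by one and relabels each $j$-gene with $j \geq i$ as a $(j-1)$-gene. The key verification is that this preserves the genomic structure: the violating condition forces the rightmost column containing an old $(i-1)$-entry to lie strictly west of the leftmost column containing an old $i$-entry, so after relabeling the old $(i-1)$-genes and the former $i$-genes---which now share the common label $i-1$---have pairwise disjoint column ranges (preserving the contiguous-column rule), while the one-box-per-row rule is preserved trivially since no gene changes as a set of boxes. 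Iterating terminates because each shift strictly shortens the support of the weight; I would then verify independence of the choice of violating index by a local confluence argument: if $i < j$ are both violating, each shift preserves the other as a violating index (suitably reindexed), and the two orderings yield the same tableau.

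For the inverse construction, given $S \in \qygen(\lambda)$ with $\wt(S) = \alpha = (\alpha_1, \dots, \alpha_\ell)$ and a weak composition $a$ with $a^+ = (\beta_1, \dots, \beta_m) \vDash \alpha$, I associate a genomic tableau $T(S, a)$ as follows. The refinement determines, for each part $\alpha_j$, a decomposition into consecutive parts of $a^+$; and since genes of the same label in $S$ have pairwise disjoint column ranges, the $\alpha_j$ many $j$-genes are linearly ordered left-to-right, so I split them into sub-blocks according to this decomposition and relabel each sub-block by the corresponding index in the support of $a$. Routine checks show that $T(S, a) \in \gen(\lambda)$ has weight exactly $a$ and $\phi(T(S, a)) = S$; conversely each $T \in \phi^{-1}(S)$ equals $T(S, \wt(T))$ by tracking gene identities through the shifts, completing the bijection. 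The main obstacle will be the careful bookkeeping around how shifts interact with the gene data: the gene structure has no Schur analogue, so verifying that relabeled genes retain the contiguous-column rule and establishing confluence both require more care than in the Assaf-Searles argument. The finite-variable formula follows from the same argument restricted to $\gen^n(\lambda)$, noting that shifts only decrement entries and $T(S, a)$ uses only labels in the support of $a$, so the bijection restricts to $\qygen^n(\lambda)$ verbatim.
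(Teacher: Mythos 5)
Your proposal follows essentially the same route as the paper: the paper's proof defines a regularization map $\reg : \gen(\lambda) \to \qygen(\lambda)$ (replace every $i$ by $i-1$ whenever every $i$ lies strictly east of every $i-1$, iterated) and then parametrizes each fiber by the weak compositions whose positive part refines the weight of the quasiYamanouchi tableau, using exactly your left-to-right gene renumbering for the inverse construction. The only cosmetic difference is that your elementary shift decrements all entries $\geq i$ at once rather than only the entries equal to $i$; the composite maps agree, and the fiber analysis is identical.
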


Note that Theorem~\ref{thm:main} is immediate from Theorem~\ref{thm:qy}.
To prove Theorem~\ref{thm:qy}, we first need a genomic analogue of a map from \cite[Definition~2.5]{Assaf.Searles}, which we will call {\bf regularization}. (Regularization is called \emph{destandardization} in \cite{Assaf.Searles}; however, we avoid the latter name, as we will later require a \emph{$K$-standardization} map that is confusingly unrelated.)
Let $T$ be a genomic tableau. The regularization $\reg(T)$ of $T$ is obtained as follows: For any $i > 1$, if every $i$ in $T$ if strictly East of every $i-1$ in $T$, replace every $i$ by $i-1$, maintaining the same decomposition of the boxes of $T$ into genes. Iterate this process until no further replacements can be made.

\begin{lemma}\label{lem:reg}
The map $\reg :  \gen(\lambda) \to \qygen(\lambda)$ is well-defined and surjective. The fixed points of $\reg$ are exactly the quasiYamanouchi tableaux.
\end{lemma}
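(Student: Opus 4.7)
My plan is to prove Lemma~\ref{lem:reg} by establishing, in order, (i) termination of the iterative process, (ii) preservation of the genomic tableau structure at each step, (iii) the characterization of the terminal state as quasiYamanouchi, and (iv) well-definedness of the output independent of replacement order. Once these are in place, the fixed-point characterization is immediate, and surjectivity follows because every $U \in \qygen(\lambda)$ is a fixed point and hence satisfies $\reg(U) = U$.

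For (i) and (iii), let $\Phi(T)$ be the sum of all labels appearing in $T$. Any replacement $i \to i-1$ strictly decreases $\Phi$, and $\Phi \ge |\lambda|$, so the process halts in finitely many steps. At termination, no replacement applies, so whenever some $i > 1$ appears then $i-1$ must also appear (otherwise the replacement would be vacuously applicable), and for each such $i$ some instance must lie weakly west of some instance of $i-1$ (otherwise the replacement would again apply). This is exactly the quasiYamanouchi condition. Conversely, a quasiYamanouchi $T$ admits no replacement step, hence is a fixed point, completing the fixed-point characterization.

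The substantive content is (ii): verifying that a single replacement $i \to i-1$ sends a genomic tableau to a genomic tableau. Shape is preserved trivially. Semistandardness is preserved by a direct case analysis on rows (the only way to create a descent would require a neighboring $i$ that was not replaced, which cannot occur since every $i$ is replaced simultaneously), and a column violation would require an $(i-1)$ sharing a column with an $i$ in the original $T$, contradicting the replacement hypothesis. The second gene condition (at most one box per row per gene) is vacuously preserved since no gene is split or merged. The first gene condition is the only point requiring real care: consider a former $i$-gene $\mathcal{G}$, now an $(i-1)$-gene, with boxes in columns $c_1 < c_3$; an intermediate $(i-1)$ in column $c_2$ of the new tableau originated either as an $i$, in which case the original first condition applied to $\mathcal{G}$ already places it in $\mathcal{G}$, or as a former $i-1$, in which case $c_2$ must lie strictly west of every former-$i$ column and so $c_2 < c_1$, contradicting $c_1 < c_2$. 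The analogous argument handles genes that were already $(i-1)$-genes in $T$.

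For (iv), I need to confirm that the algorithmic description yields a single well-defined output. Replacements at labels $i$ and $j$ with $|i - j| \ge 2$ commute trivially, and a short diamond check handles adjacent replacements, so by standard confluence any order of eligible steps produces the same terminal tableau; alternatively, one may simply fix a canonical order (for instance, always acting on the smallest eligible $i$) to define $\reg$ deterministically without altering either the set of fixed points or the image. The main obstacle throughout is the gene-preservation check in step (ii); the rest of the argument is organizational or essentially immediate.
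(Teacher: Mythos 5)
Your proof is correct and takes essentially the same approach as the paper's, which simply asserts as ``clear'' the preservation of semistandard rows/columns and of the gene conditions that you verify in detail, and likewise derives termination, the quasiYamanouchi characterization of terminal states, the fixed points, and surjectivity exactly as you do. The only point you address that the paper passes over silently is order-independence of the replacement steps; your confluence check (or the fallback of fixing a canonical order of eligible labels) is a legitimate way to dispose of it and is, if anything, a welcome extra precaution rather than a divergence in method.
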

\begin{proof}
Clearly, the algorithm preserves weakly increasing rows. Since we do not replace $i$ by $i-1$ if $i$ and $i-1$ appear in the same column, the algorithm preserves strictly increasing columns. The validity of the resulting gene decomposition is similarly clear. Since the algorithm clearly terminates (entries only decrease), and terminates only when the tableau satisfies the quasiYamanouchi condition, regularization is a well-defined map $\reg :  \gen(\lambda) \to \qygen(\lambda)$. Clearly, $\reg(T) = T$ for $T \in \qygen(\lambda)$, so the map is surjective. Since $\reg$ certainly has no other fixed points, the second sentence of the lemma follows as well.
\end{proof}

\begin{proof}[Proof of Theorem~\ref{thm:qy}]
We consider only the infinitely-variable case explicitly; the proof in the finite-variable case is exactly analogous.

We have 
\begin{align*}
U_\lambda &= \sum_{T \in \gen(\lambda)} \xx^{\wt(T)} \\
&= \sum_{U \in \qygen(\lambda)} \sum_{V \in \reg^{-1}(U)} \xx^{\wt(V)},
\end{align*} 
where the first equality is by Definition~\ref{def:genomicSchur} and the second is by Lemma~\ref{lem:reg}. 
Hence, it is enough to show that 
\[\sum_{V \in \reg^{-1}(U)} \xx^{\wt(V)} = F_{\wt(U)},\]
 for $U \in \qygen(\lambda)$.
 
Fix $U \in \qygen(\lambda)$ and note that $\wt(U)$ is a strong composition. If $V \in \reg^{-1}(U)$, then $\wt(V)^+ \vDash \wt(U)$, since we obtain $U$ from $V$ by repeatedly replacing every $i$ by $i-1$. Conversely, suppose $a^+ \vDash \wt(U)$. There is a unique $V \in \reg^{-1}(U)$ with $\wt(V) = a$, obtained as follows. Number the $1$-genes of $U$ from left to right, then the $2$-genes of $U$ from left to right, etc. In $V$, the first $a_1$ of these genes are $1$-genes, the next $a_2$ of these genes are $2$-genes, etc. By construction, $V \in \reg^{-1}(U)$ and $\wt(V) = a$; uniqueness is clear from the lack of choices.
\end{proof}

\section{Increasing tableaux}\label{sec:inc}

In this section, we give another formula for the fundamental quasisymmetric expansion of a genomic Schur function. An attractive feature of this new formula is that it gives an alternative definition of the genomic Schur function, which avoids mention of genomic tableaux.

An {\bf increasing tableau} is a semistandard tableau with strictly increasing rows. Increasing tableaux were perhaps first studied in their own right in \cite{Thomas.Yong:K}, although they appeared earlier in various contexts (e.g., \cite{Edelman.Greene,Jockusch.Propp.Shor}). As in \cite{Mandel.Pechenik}, we say an increasing tableau $T$ is {\bf gapless} if the set of numbers appearing in $T$ is an initial segment of $\mathbb{Z}_{>0}$. We write $\incgl(\lambda)$ for the set of all gapless increasing tableaux of shape $\lambda$. Note that, for any $\lambda$, $\incgl(\lambda)$ is a finite set.

Let $T$ be an increasing tableau. Following \cite{DPS}, we say that the integer $i$ is a {\bf descent} of $T$ if there is some instance of $i$ in a higher row of $T$ than some instance of $i+1$.

\begin{example}\label{ex:descents}
Let $T = \ytableaushort{125,346,578}$. Then, the descents of $T$ are $2$, $4$, $5$, and $6$.
\end{example}

\begin{definition}
Let $T$ be a gapless increasing tableau with entry set $\{1, 2, \dots, n\}$. Consider the word $12 \cdots n$ and insert a bar $\vert$ after each letter $i$ that is a descent of $T$. 
The {\bf descent composition} $\des(T)$ of $T$ is the strong composition whose components are the lengths of the segments between bars in this word (read left to right).
\end{definition}

\begin{example}\label{ex:descent_comp}
For the tableau $T$ of Example~\ref{ex:descents}, we have $\des(T) = (2,2,1,1,2)$, corresponding to the word $12 \vert 34 \vert 5 \vert 6 \vert 78$.
\end{example}

\begin{theorem}\label{thm:inc}
For any partition $\lambda$, we have
\[
U_\lambda = \sum_{T \in \incgl(\lambda)} F_{\des(T)}.
\]
\end{theorem}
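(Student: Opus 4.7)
My plan is to parallel the proof of Theorem~\ref{thm:qy}, replacing the regularization map with a \emph{$K$-standardization} map $\sigma \colon \gen(\lambda) \to \incgl(\lambda)$ and grouping the sum $\sum_T \xx^{\wt(T)}$ by fibers of $\sigma$. Given a genomic tableau $T$, I would construct $\sigma(T)$ by enumerating the genes of $T$ in the order ``all $1$-genes from left to right, then all $2$-genes from left to right, and so on,'' and relabeling the cells of the $k$th gene in this enumeration by the integer $k$. A preliminary lemma would establish that (a) the cells of any single gene form an antichain (by strict column increase together with the one-cell-per-row axiom), (b) the sets of columns occupied by distinct $i$-genes are disjoint column intervals (a reformulation of the horizontal-convexity axiom), so the left-to-right order is well-defined, and (c) $\sigma(T) \in \incgl(\lambda)$.

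The heart of the argument is then to describe the fibers of $\sigma$. For $S \in \incgl(\lambda)$ with maximum entry $n$ and a weak composition $a$ with $|a| = n$, set $N_j = a_1 + \cdots + a_j$ and de-standardize by replacing every entry $\ell \in (N_{i-1}, N_i]$ of $S$ with the new label $i$, declaring cells that shared an $S$-label to form a single gene. The key claim is that
\[
\sigma^{-1}(S) = \{\text{de-standardizations of } (S, a) : a^+ \vDash \des(S)\},
\]
and that $(S, a) \mapsto $ de-standardization is a bijection onto this set with $\wt = a$. Granting this, Theorem~\ref{thm:inc} follows from the same rearrangement as in Theorem~\ref{thm:qy}:
\[
U_\lambda = \sum_{T \in \gen(\lambda)} \xx^{\wt(T)} = \sum_{S \in \incgl(\lambda)} \sum_{a^+ \vDash \des(S)} \xx^a = \sum_{S \in \incgl(\lambda)} F_{\des(S)}.
\]

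The main obstacle is the fiber claim. For the ``$\supseteq$'' direction, two things need to be verified about the de-standardization: strict column increase and the horizontal-convexity axiom. Strict column increase reduces to the lemma that if $p < p'$ appear in a common column of $S$, then $[p, p'-1]$ contains a descent of $S$; this follows by tracking rows along the chain $p, p+1, \dots, p'$, since absence of a descent at $m$ forces every $(m+1)$-cell to be weakly above every $m$-cell, and iterating makes $p$-cells and $p'$-cells lie in incompatible rows if they share a column. Horizontal convexity reduces to a ``no interlock'' claim: if $p \neq p'$ are labels of $S$ with no descent strictly between them, then the column sets of their cells do not interleave. I would prove this by a rectangle argument: any interleaved configuration, combined with the weakly-above relation for cells of consecutive labels in $[\min(p,p'), \max(p,p')]$, produces a corner cell of $S$ whose entry must simultaneously exceed $\max(p,p')$ by row strictness and lie below $\min(p,p')$ by column strictness, a contradiction. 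For the ``$\subseteq$'' direction, I need that if $d$ is a descent of $S$ then the labels $d$ and $d+1$ lie in distinct blocks for every $T \in \sigma^{-1}(S)$: when the descent is witnessed by a same-column pair in $S$, this is immediate from column strictness of $T$; when it is witnessed by an NE--SW pair, the left-to-right numbering convention defining $\sigma$ would otherwise place the $(d+1)$-gene before the $d$-gene within their shared block of $S$-labels, contradicting $d < d+1$.
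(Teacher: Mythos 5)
Your proposal is correct, and it is built on the same key map as the paper's proof --- the $K$-standardization that numbers all $1$-genes left to right, then all $2$-genes, etc. --- but it organizes the argument differently. The paper factors the computation in two stages: it first invokes Theorem~\ref{thm:qy} (proved via the fibers of $\reg : \gen(\lambda) \to \qygen(\lambda)$) to get $U_\lambda = \sum_{U \in \qygen(\lambda)} F_{\wt(U)}$, and then only needs that $K$-standardization restricts to a bijection $\qygen(\lambda) \to \incgl(\lambda)$ carrying $\wt$ to $\des$; the bijectivity is imported from prior work (\cite[Theorem~2.4]{Pechenik.Yong:genomic}), with the inverse ($K$-semistandardization via Pieri strips) merely described. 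You instead compute the fibers of $K$-standardization over all of $\gen(\lambda)$ in a single step; since your $\sigma$ equals $\Phi \circ \reg$, this fuses the two fiber computations into one. What this buys is a self-contained proof that never mentions quasiYamanouchi tableaux and does not lean on the earlier paper; what it costs is that you must re-establish from scratch the well-definedness of de-standardization --- your column-strictness lemma (a descent must occur between two labels sharing a column) and the ``no interlock'' rectangle argument are essentially a re-proof of the content of the cited \cite[Theorem~2.4]{Pechenik.Yong:genomic}. Both of those sketched arguments are sound: the absence of a descent at $m$ does force every $(m+1)$-cell weakly above every $m$-cell, and an interleaved column configuration does yield a cell whose entry is forced both $\leq p$ and $> p'$ with $p < p'$. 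One small imprecision: the column sets of distinct $i$-genes need not literally be intervals (a gene need not occupy every column between its extremes); what your argument actually shows, and all that is needed for the left-to-right order, is that these column sets neither share a column nor interleave. Your ``$\subseteq$'' direction, splitting descents of $S$ into same-column and NE--SW witnesses, is also correct and matches the observation the paper makes (for quasiYamanouchi $U$, $i$ is a descent of $\Phi(U)$ exactly when genes $i$ and $i+1$ carry distinct labels), just run in the opposite direction.
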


\begin{proof}
We first recall the {\bf $K$-standardization} map $\Phi$ from \cite[\textsection 2]{Pechenik.Yong:genomic}. For $U \in \gen(\lambda)$, number the $1$-genes of $U$ from left to right, then the $2$-genes of $U$ from left to right, etc., as in the proof of Theorem~\ref{thm:qy}. The $K$-standarization $\Phi(U)$ of $U$ is the gapless increasing tableau with the positive integer $i$ in each cell of the $i$th gene in this ordering.

Observe that if $U$ is quasiYamanouchi, then $i$ is a descent of $\Phi(U)$ precisely if the $i$th and $(i+1)$st genes of $U$ contain distinct labels. Hence, for $U \in \qygen(\lambda)$, we have $\wt(U) = \des(\Phi(U))$.

It is enough then to observe that the map $\Phi : \qygen(\lambda) \to \incgl(\lambda)$ is bijective. This fact follows from \cite[Theorem~2.4]{Pechenik.Yong:genomic}. Specifically, the inverse map is given as follows. A {\bf horizontal strip} is a skew partition with at most one box in any column. A {\bf Pieri strip} is an increasing tableau filling of a horizontal strip such that labels weakly increase from left to right. 
For a composition $\mu$, let 
\[
M_1(\mu) = \{1, 2, \dots, \mu_1 \}, M_2(\mu) = \{ \mu_1 + 1, \dots, \mu_2 \}, \text{ etc.}
\]
Say the gapless increasing tableau $V \in \incgl(\lambda)$ is {\bf $\mu$-Pieri-filled} if the numbers from $M_i(\mu)$ form a Pieri strip in $V$, for each $i$. Note that, in particular, every gapless increasing tableau $V \in \incgl(\lambda)$ is $\des(V)$-Pieri-filled. The {\bf $K$-semistandardization} $\Psi(V)$ of $V$ is then the genomic tableau given by placing the number $i$ in those boxes with entry from $M_i(\des(V))$ in $V$, declaring two boxes to be in the same gene if they have equal labels in $V$. It is easy to see that $\Psi(V) \in \qygen(\lambda)$, and that $\Phi$ and $\Psi$ are then mutually inverse injections between finite sets. The theorem follows.
\end{proof}

\begin{remark}
Theorem~\ref{thm:inc} could be taken as an alternative definition of the genomic Schur function $U_\lambda$, allowing one to define $U_\lambda$ without reference to genomic tableaux. Indeed, the origin of this paper was a question of Bruce Westbury to the author, asking if the power series of Theorem~\ref{thm:inc} was symmetric. Interestingly, it seems hard, however, to prove this symmetry without reference to genomic tableaux.
\end{remark}

\begin{remark}
Unlike Theorem~\ref{thm:qy}, Theorem~\ref{thm:inc} does not naturally lead to an efficient formula for the finite-variable genomic Schur polynomial $U_\lambda(x_1, \dots, x_n)$. To give the fundamental expansion of $U_\lambda(x_1, \dots, x_n)$ by increasing tableaux, one must consider all of the gapless increasing tableaux of shape $\lambda$, and then discard those whose descent composition has more than $n$ parts.
\end{remark}

\section{The Schur expansion of genomic Schur functions}\label{sec:Schur}

As shown in Example~\ref{ex:nonpositive} and \cite[Example~6.7]{Pechenik.Yong:genomic}, the genomic Schur function $U_\lambda$ is not always a positive sum of ordinary Schur functions. However, the genomic Schur function is Schur-positive for small values of $\lambda$, as shown in \cite[Table~1]{Pechenik.Yong:genomic}. In this section, we use the formula of Theorem~\ref{thm:inc} to shed some light on this phenomenon.

Building on the inverse Kostka matrix of \"O.~E\u{g}ecio\u{g}lu--J.~Remmel \cite{Egecioglu.Remmel}, E.~Egge--N.~Loehr--G.~Warrington  \cite{Egge.Loehr.Warrington} gave a combinatorial (but necessarily signed) formula for the Schur expansion of any symmetric function with known expansion into fundamental quasisymmetric functions. The situation of having a symmetric function with known fundamental expansion but unknown Schur expansion is not uncommon, especially in the context of the theory of Macdonald polynomials. In light of Theorem~\ref{thm:inc}, this is also the case with genomic Schur functions.

The Egge-Loehr-Warrington formula (or its relatives) can then sometimes be used to give combinatorial proofs of Schur-positivity, in combination, for example, with a sign-reversing involution; see, for instance, work of E.~Sergel \cite{Sergel} and of D.~Qiu--J.~Remmel \cite{Qiu.Remmel}. Such an approach is certainly not possible for general genomic Schur functions, since genomic Schur functions are not Schur-positive; however, we still obtain a reasonably compact (but cancellative) formula for the Schur expansion. We will, moreover, obtain a \emph{positive} formula for the Schur expansion of $U_\lambda$ in the special case that $\lambda$ has at most $2$ rows.

Instead of the Egge-Loehr-Warrington formula, we will use the somewhat simpler (but essentially equivalent) rule by A.~Garsia--J.~Remmel \cite{Garsia.Remmel}. An alternative proof of this later rule has been given by I.~Gessel \cite{Gessel:expansion}. We first recall the notion of Schur functions indexed by general compositions. For $\alpha$ a composition, the Schur function $s_\alpha$ is defined to be the determinant $| h_{\alpha_i - i + j} |$, where $h_k$ denotes the $k$th complete homogeneous symmetric function. Note that this definition restricts to the ordinary Jacobi-Trudi formula for ordinary Schur functions in the case that $\alpha$ is a partition. More generally, we have, for each composition $\alpha$, that either $s_\alpha = 0$ or else $s_\alpha = \pm s_\lambda$ for some $\lambda$. The partition $\lambda$ in question may be found easily by iterating the straightening law 
\begin{equation}\label{eq:straightening}
s_{(\alpha_1, \dots, \alpha_i, \alpha_{i+1}, \dots \alpha_k)} = - s_{(\alpha_1, \dots, \alpha_{i+1} - 1, \alpha_i + 1, \dots, \alpha_k)},
\end{equation}
for $\alpha_{i+1} \neq 0$.
 Further discussion may be found in \cite{Macdonald}.

\begin{theorem}\label{thm:schur}
For any partition $\lambda$, we have
\[
U_\lambda= \sum_{T \in \incgl(\lambda)} s_{\des(T)}.
\]
\end{theorem}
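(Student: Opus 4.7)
The plan is to derive Theorem~\ref{thm:schur} as an essentially immediate corollary of Theorem~\ref{thm:inc} combined with the Garsia--Remmel rule from \cite{Garsia.Remmel}, exactly as foreshadowed by the discussion opening this section. Theorem~\ref{thm:inc} already supplies the fundamental expansion
\[
U_\lambda = \sum_{T \in \incgl(\lambda)} F_{\des(T)},
\]
and $U_\lambda$ is genuinely symmetric by \cite[Theorem~6.6]{Pechenik.Yong:genomic}, so the preconditions for applying a rule that converts fundamental expansions of symmetric functions into Schur expansions are in place.

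Next I would invoke the Garsia--Remmel rule, which states that for any symmetric $f$ with fundamental expansion $f = \sum_\alpha c_\alpha F_\alpha$, one also has $f = \sum_\alpha c_\alpha s_\alpha$, where $s_\alpha$ is read as the Jacobi--Trudi determinant $|h_{\alpha_i - i + j}|$ discussed in the setup above. Each such $s_\alpha$ either vanishes or reduces to $\pm s_\mu$ for a genuine partition $\mu$ via iterated application of the straightening law \eqref{eq:straightening}. Applied termwise to the expansion provided by Theorem~\ref{thm:inc}, this substitution immediately yields
\[
U_\lambda = \sum_{T \in \incgl(\lambda)} s_{\des(T)},
\]
which is the statement of the theorem. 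The use of the Garsia--Remmel rule is legitimate precisely because $U_\lambda$ is known to be symmetric; without that hypothesis, $F_\alpha \mapsto s_\alpha$ would not give a meaningful identity on the ambient quasisymmetric function.

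Because both inputs are already in hand, there is no substantial obstacle to overcome for Theorem~\ref{thm:schur} itself. The genuine difficulty in this circle of ideas, addressed elsewhere in this section, is extracting a \emph{cancellation-free} form from this signed expansion: by Example~\ref{ex:nonpositive}, no such form can exist in general, but one may still hope to exhibit explicit sign-reversing involutions in structurally simple cases such as the two-row partitions treated in Proposition~\ref{prop:2row}.
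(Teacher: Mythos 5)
Your proof is correct and takes exactly the same route as the paper: the paper's own proof is a one-line citation combining Theorem~\ref{thm:inc} with \cite[Theorem~1]{Garsia.Remmel}. Your additional remarks on why symmetry of $U_\lambda$ is needed for the $F_\alpha \mapsto s_\alpha$ substitution are accurate and merely make explicit what the paper leaves implicit.
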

\begin{proof}
This is immediate by combining Theorem~\ref{thm:inc} and \cite[Theorem~1]{Garsia.Remmel}.
\end{proof}

\begin{example}\label{ex:33}
We may see that $U_{33}$ is Schur-positive as follows. By Theorem~\ref{thm:schur}, we consider the $11$ gapless increasing tableaux
\[\begin{array}{ccccc}
\ytableaushort{12 {*(SkyBlue)3},456} & \ytableaushort{{*(SkyBlue)1}{*(SkyBlue)3}{*(SkyBlue)5},246} & \ytableaushort{{*(SkyBlue)1}3{*(SkyBlue)4},256} & \ytableaushort{1{*(SkyBlue)2}{*(SkyBlue)5},346} & \ytableaushort{1{*(SkyBlue)2}{*(SkyBlue)4},356} \\ \\
\ytableaushort{{*(SkyBlue)1}{*(SkyBlue)2}{*(SkyBlue)4},{*(SkyBlue)2}35} & \ytableaushort{{*(SkyBlue)1}2{*(SkyBlue)3},245} & \ytableaushort{1{*(SkyBlue)2}{*(SkyBlue)4},3{*(SkyBlue)4}5} & \ytableaushort{{*(SkyBlue)1}{*(SkyBlue)3}{*(SkyBlue)4},2{*(SkyBlue)4}5} & \ytableaushort{1{*(SkyBlue)2}{*(SkyBlue)3},{*(SkyBlue)3}45} \\ \\
& & \ytableaushort{{*(SkyBlue)1}{*(SkyBlue)2}{*(SkyBlue)3},{*(SkyBlue)2}{*(SkyBlue)3}4} & &
\end{array}\]
of shape $(3,3)$, where the descents are shaded in \textcolor{SkyBlue}{blue}.
Hence, we have
\begin{align*}
U_{33} &= s_{33} + s_{1221} + s_{132} + s_{231} + s_{222} \\
&+ s_{1121} + s_{122} + s_{221} + s_{1211} + s_{212} \\
&+ s_{1111}.
\end{align*}
Applying the straightening law~\eqref{eq:straightening}, this becomes
\begin{align*}
U_{33} &= s_{33} + 0 - s_{222} + 0 + s_{222} \\
&+ 0 + 0 + s_{221} + 0 + 0 \\
&+ s_{1111} \\
&= s_{33} + s_{221} + s_{1111},
\end{align*}
establishing the Schur-positivity of $U_{33}$.
\end{example}

The Schur-positivity in Example~\ref{ex:33} is a special case of a more general phenomenon.
\begin{proposition}\label{prop:2row}
If $\lambda = (m,n)$ is a partition with two rows, then $U_\lambda$ is Schur-positive. 

More specifically,
if $m=n$, we have that $U_{(m,m)}$ is a multiplicity-free sum of flag-shaped Schur functions:
\[
U_{(m,m)} = \sum_{k=0}^{m-1} s_{(m-k,m-k,1^k)},
\]
where $1^k$ denotes a sequence of $k$ $1$'s. 
If $m>n$, then
 $U_{(m,n)}$ is the multiplicity-free sum:
\[
U_{(m,n)} = s_{(m,n)} +  \sum_{k=1}^{n-1} s_{(m-k,n-k,1^k)} + s_{(m-k,n-k+1,1^{k-1})} .
\]
\end{proposition}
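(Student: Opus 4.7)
The natural plan is to deduce Proposition~\ref{prop:2row} from Theorem~\ref{thm:schur}, which writes
\[
U_{(m,n)} = \sum_{T \in \incgl((m,n))} s_{\des(T)}.
\]
Concretely, I would enumerate $\incgl((m,n))$ explicitly, apply the straightening law~\eqref{eq:straightening} to each summand to rewrite $s_{\des(T)}$ as $\pm s_\mu$ or $0$, and then exhibit a sign-reversing involution showing that the non-canonical contributions cancel in pairs, leaving exactly the positive terms claimed in the proposition.

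For enumeration, I would encode each $T \in \incgl((m,n))$ by its ``type sequence'' $s_1 \cdots s_N \in \{A,B,C\}^N$, where $s_i$ records whether the value $i$ appears only in row~1 (type $A$), only in row~2 (type $B$), or in both rows (type $C$). Writing $k$ for the number of $C$s we have $N = m+n-k$, and column-strictness translates to a ballot condition on prefix counts of $A{+}C$ versus $B{+}C$. The descent composition is read off directly: $i$ is a descent precisely when $s_i \in \{A,C\}$ and $s_{i+1} \in \{B,C\}$, and a short case analysis shows that each segment of $(s_i)$ between descent-bars takes the form ``some $B$s, then at most one $C$, then some $A$s'' with mild boundary conditions at the first and last segments.

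With $\des(T)$ in hand I would stratify $\incgl((m,n))$ by $k$ and, within each stratum, define an involution that at a canonical location of the type sequence swaps an adjacent pair of the form $(A,B)$ with $(B,A)$ (when such a swap exists and preserves the ballot condition). This swap transposes two adjacent parts of $\des(T)$ whose Schur functions are related by the straightening law, so each paired contribution cancels to zero. The fixed points of the involution are then precisely the canonical tableaux whose descent compositions straighten with sign $+1$ to the partitions $(m-k,n-k,1^k)$ and $(m-k,n-k+1,1^{k-1})$ appearing in the proposition; summing over $k$ recovers the claimed multiplicity-free expansion.

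The main obstacle will be the explicit construction of this involution and the verification of its properties: well-definedness (a canonical swap location exists outside the fixed-point set), preservation of shape $(m,n)$ and the ballot condition, and a bijective identification of fixed points with the claimed partitions. The $m=n$ square case should be cleanest thanks to the symmetry between $A$ and $B$; for the rectangular case $m>n$ some additional care will be needed at the extreme strata (small and large $k$), where the structure of the canonical tableaux differs slightly and several $s_{\des(T)}$ will straighten directly to $0$ rather than needing a partner.
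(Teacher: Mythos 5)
Your strategy is genuinely different from the paper's, but as written it contains a real gap: the sign-reversing involution that carries the entire burden of the proof is never constructed, and you acknowledge as much (``the main obstacle will be the explicit construction of this involution\dots''). The preparatory material is fine --- the encoding of a two-row gapless increasing tableau by a type sequence in $\{A,B,C\}$, the ballot condition, and the description of descents and of the segments between descent-bars are all correct. But the cancellation mechanism is not. First, the straightening law~\eqref{eq:straightening} replaces adjacent parts $(a,b)$ by $(b-1,a+1)$, not by $(b,a)$, so ``transposes two adjacent parts'' is not what is needed; one must check that an adjacent $(A,B)\leftrightarrow(B,A)$ swap actually effects this $(a,b)\mapsto(b-1,a+1)$ move on $\des(T)$, and the effect of such a swap on the descent set depends on the neighboring letters (a descent can be created at position $i-1$ or $i+1$ while the one at $i$ is destroyed). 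Second, the fixed-point set cannot be ``tableaux whose descent composition straightens with sign $+1$'': already for $U_{33}$ the tableau with $\des(T)=(2,2,2)$ (a partition, contributing $+s_{222}$) must be cancelled against the tableau with $\des(T)=(1,3,2)$ (contributing $-s_{222}$), so some tableaux whose descent compositions are honest partitions are \emph{not} fixed points. Identifying the correct fixed-point set, handling the terms that straighten directly to $0$, and verifying that the swap preserves the ballot condition are precisely the nontrivial parts, and none of them is carried out.

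For comparison, the paper avoids signs altogether. It works with the fundamental expansion of Theorem~\ref{thm:inc} rather than the signed Schur expansion of Theorem~\ref{thm:schur}: it cites a bijection from \cite[Proposition~2.1]{Pechenik} between gapless increasing tableaux of shape $(m,m)$ with maximum entry $2m-k$ and standard Young tableaux of shape $(m-k,m-k,1^k)$ (extended in the $m>n$ case to the two flag shapes in the statement), notes that this bijection preserves descent sets, and then invokes Gessel's positive formula expressing $s_\mu$ as $\sum_{S\in\SYT(\mu)}F_{\des(S)}$. Positivity and the multiplicity-free expansion then fall out immediately, with no involution needed. If you want to pursue your route, the cleanest fix is probably to reorganize it into exactly such a descent-preserving bijection $\incgl((m,n))\to\bigsqcup_k \SYT(\text{flag shapes})$ in the type-sequence language, rather than to fight the signs.
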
 
\begin{proof}
By \cite[Proposition~2.1]{Pechenik}, there is a bijection between gapless increasing tableaux of shape $(m,m)$ with maximum entry $2m-k$ and standard Young tableaux of shape $(m-k,m-k, 1^k)$. By \cite[Proof of Corollary~2.2]{Pechenik}, this bijection preserves descent sets. Thus, writing $\SYT(\lambda)$ for the set of all standard Young tableaux of shape $\lambda$, we have
\begin{align*}
U_{(m,m)} &= \sum_{T \in \incgl(m,m)} F_{\des(T)} \\
&= \sum_{k=0}^{m-1} \sum_{S \in \SYT(m-k,m-k,1^k)} F_{\des(S)},
\end{align*}
by combining these facts with Theorem~\ref{thm:inc}.
The rectangular case of the proposition then follows from I.~Gessel's formula \cite{Gessel} for the fundamental expansion of an ordinary Schur function.

To prove the case $m>n$, it suffices to observe that the bijection of \cite[Proposition~2.1]{Pechenik} easily extends to a bijection between gapless increasing tableaux of shape $(m,n)$ with maximum entry $m+n-k$ and standard Young tableaux whose shape is either $(m-k,n-k,1^k)$ or $(m-k, n-k+1, 1^{k-1})$. Since this extended bijection also preserves descent sets, the $m>n$ case then follows analogously to the rectangular case.
\end{proof}
\begin{remark}\label{rem:projective}
In the Schur-positive situations of Example~\ref{ex:33} and Proposition~\ref{prop:2row}, one might hope to realize the homogeneous pieces of $U_\lambda$ as quasisymmetric Frobenius characters of \emph{projective} $0$-Hecke representations. This is not, however, possible. Using the explicit characterization of all indecomposable projective $0$-Hecke representations described in \cite{Huang}, it is straightforward to check, for example, that $s_{221}$ (the degree $5$ piece of $U_{33}$, as identified in Example~\ref{ex:33})  is not the quasisymmetric Frobenius character of any projective $0$-Hecke module.
\end{remark}

\begin{example}\label{ex:333}
To understand the failure of Schur-positivity in Example~\ref{ex:nonpositive}, it is enough to consider the $197$ gapless increasing tableaux of shape $(3,3,3)$. In fact, since the Schur-nonpositivity is restricted to homogeneous degree $8$, we need only consider the $84$ of these tableaux with maximum entry $8$. From the descent sets of these, we obtain by Theorem~\ref{thm:schur} that the homogeneous degree $8$ piece of $U_{333}$ is 
\begin{align*}
s_{1111211} &+ 2s_{111212} + 2s_{111221} + s_{111311} + s_{1121111} + s_{112112} + 3s_{112121} + s_{11213}  \\
&+ 2s_{112211} + 3s_{11222} + s_{11231} + s_{113111} + s_{11312} + s_{11321} + s_{121112} \\
&+ 2s_{121121} + s_{12113} + 3s_{121211} + 3s_{12122} + 2s_{12131} + 2s_{122111} + 3s_{12212} \\
&+ 4s_{12221} + s_{1223} + s_{12311} + s_{1232} + s_{13112} + 2s_{13121} + s_{13211} + s_{1322} \\
&+ s_{211121} + s_{211211} + 2s_{21122} + s_{21131} + 2s_{212111} + 2s_{21212} + 3s_{21221} \\
&+ s_{2123} + s_{21311} + s_{2132} + 2s_{22112} + 3s_{22121} + s_{2213} +3s_{22211} + 2s_{2222} \\
&+ s_{2231} + s_{2312} + s_{2321} + s_{31121} + s_{31211} + s_{3122} + s_{3212} + s_{3221}.
\end{align*}
Deleting the terms that are $0$, we obtain that the degree $8$ piece of $U_{333}$ is
\[
s_{13211} + s_{1322} + s_{21311} + s_{2132} + s_{2213} + 3s_{22211} +2s_{2222} + s_{3221}.
\]
Applying the straightening law~\eqref{eq:straightening}, this becomes 
\begin{align*}
&-s_{22211} - s_{2222} - s_{22211} - s_{2222} - s_{2222} + 3s_{22211} + 2s_{2222} + s_{3221} \\
&= s_{22211} - s_{2222} + s_{3221},
\end{align*}
as given in Example~\ref{ex:nonpositive}.
\end{example}

\begin{remark}
In light of Theorems~\ref{thm:inc} and~\ref{thm:schur}, it would be desirable to have enumerations of gapless increasing tableaux. For some special cases of partitions $\lambda$, such counting formulas appear in \cite{Pechenik,Pressey.Stokke.Visentin}, where they are related to small Schr\"oder paths and their higher-dimensional analogues. In general, however, there unfortunately does not appear to be a simple formula for the number of gapless increasing tableaux of shape $\lambda$.
\end{remark}

\section*{Acknowledgements}
This paper was inspired by conversations with Bruce Westbury during the conference ``SageDays@ICERM: Combinatorics and Representation Theory,'' held July 2018 at the Institute for Computational and Experimental Research in Mathematics. Thanks to the organizers (Gabriel Feinberg, Darij Grinberg, Ben Salisbury, and Travis Scrimshaw) for creating such a productive environment.
The author is also grateful for helpful conversations with Dominic Searles, Emily Sergel and David Speyer.

The author was supported by a Mathematical Sciences Postdoctoral Research Fellowship (\#1703696) from the National Science
Foundation.

%
%

\bibliographystyle{amsalpha} 
\bibliography{genomicSchur}

\end{document}